\newcommand{\calB}{\mathcal{B}}
\newcommand{\calD}{\mathcal{D}}
\newcommand{\calE}{\mathcal{E}}
\newcommand{\calH}{\mathcal{H}}
\newcommand{\calK}{\mathcal{K}}
\newcommand{\calL}{\mathcal{L}}
\newcommand{\calM}{\mathcal{M}}
\newcommand{\NN}{\mathds{N}}
\newcommand{\RR}{\mathds{R}}
\newcommand{\ZZ}{\mathds{Z}}
\DeclareMathOperator{\supp}{supp}
\DeclareMathOperator{\rmCap}{Cap}
\newcommand{\One}{\mathds{1}}
\newcommand{\rmd}{\mathrm{d}} 
\newtheorem{theorem}{Theorem}[section]
\newtheorem{corollary}[theorem]{Corollary}
\newtheorem{proposition}[theorem]{Proposition}
\theoremstyle{definition}
\newtheorem{definition}[theorem]{Definition}
\newtheorem{example}[theorem]{Example}
\theoremstyle{remark}
\newtheorem{remark}[theorem]{Remark}
\begin{document}

\title{Compactness of Schrödinger semigroups}

\author[D.~Lenz]{Daniel Lenz$^1$}
\author[P.~Stollmann]{Peter Stollmann$^2$}
\author[D.~Wingert]{Daniel Wingert$^3$}
\address{$^1$ Mathematisches Institut, Friedrich-Schiller Universit\"at Jena,
  Ernst-Abb\'{e} Platz 2, 07743 Jena, Germany}
\email{ daniel.lenz@uni-jena.de }
\address{$^2$ Fakult\"at f\"ur Mathematik,
           Technische Universit\"at, 09107 Chemnitz, Germany}
\email{ peter.stollmann@mathematik.tu-chemnitz.de }
\address{$^3$ Fakult\"at f\"ur Mathematik,
           Technische Universit\"at, 09107 Chemnitz, Germany }
\email{ daniel.wingert@s2000.tu-chemnitz.de }

\keywords{compact resolvent, Schrödinger operators}
\subjclass[2000]{47B07, 35Q40, 47N50}

\begin{abstract}
 This paper is concerned with emptyness of the essential spectrum, or equivalently compactness of the  semigroup, for perturbations of selfadjoint operators that are bounded below (on an $L^2$-space).

 For perturbations by a (nonnegative) potential we obtain a simple criterion for compactness of the semigroup in terms of relative compactness of the operators of multiplication with characteristic functions of sublevel sets. In the context of Dirichlet forms, we can even characterize compactness of the semigroup for measure perturbations. Here, certain 'averages' of the measure outside of  compact sets play a role.

 As an application we obtain compactness of semigroups for Schrödinger operators with potentials whose sublevel sets are thin at infinity.
 \bigskip
 
 This is the pre-peer reviewed version of the following article: \\
 Lenz, D., Stollmann, P., Wingert, D., Compactness of Schrödinger semigroups, Mathematische Nachrichten, which has been published in final form at \\
 \url{http://dx.doi.org/10.1002/mana.200910054}.
\end{abstract}

\maketitle

\section*{Introduction}

It is a classical fact, going back at least to Friedrichs \cite{Friedrichs-34} that a Schrödinger operator $-\Delta + V$ with a potential $V$ that goes to $\infty$ at $\infty$ has only discrete spectrum so that $\sigma_{ess}(-\Delta + V) = \emptyset$. This fact has attracted some renewed interest in recent years \cite{MazyaS-05, WangW-08, Simon-08} where the issue is first to come up with simple proofs and second to explore more general situations.
In this paper we add to this discussion with two main goals: first a rather easy method of proof and second a treatment of measure perturbations in the general Dirichlet form context.

To underline the simplicity, we start with a discussion of equivalent reformulations of the condition $\sigma_{ess}(H_0 + V) = \emptyset$ in terms of compactness of semigroups or, equivalently, resolvents. Our main results are Theorem \ref{thm:main1} and Theorem \ref{thm:equivessspec} below. In the first one, we present a useful notion of what it means that $V \rightarrow \infty$ at $\infty$ in operator theoretic terms (in the quantum setting if you wish): If we were to  talk about a measurable function on a locally compact space, $V \rightarrow \infty$ at $\infty$ would mean that the sublevel sets $\{ V \leq n \} := \{ x \in X | V(x) \leq n \}$ are relatively compact for all $n \in \NN$. The corresponding quantum condition is just that $\One_{\{ V \leq n \}}$ is relatively compact with respect to $H_0$ for all $n \in \NN$. This simple observation allows a particularly easy proof and gives a result that contains the above mentioned \cite{Simon-08,WangW-08}. While in the latter works the authors concentrated on the associated semigroups, we will see below that spectral projections or the inclusion map of the form domain come in handy. E.g., it is almost evident that additional negative perturbations can be allowed as long as they are form small. Still, mapping properties of the semigroup or the resolvent can be used to verify the assumption of relative compactness of the sublevel sets of the potential.

To give a satisfactory meaning to ''$\mu \rightarrow \infty$ at $\infty$'' is much harder for the case of a measure $\mu$. This situation is studied in some detail in Section \ref{sec:dirichletsetting} with Theorem \ref{thm:equivessspec} as the main result. An elegant criterion for the 1-d Laplacian, due to Molchanov, \cite{Molchanov-53}, says that
$-\Delta +\mu$ is compact if and only if $\mu(U+x)\to\infty$ as $x\to \pm\infty$ for some nonempty open interval (equivalently all nonempty open) $U\subset \RR$. It is quite easy to see that an analogous statement is wrong in dimensions $d\ge 2$. In the recent paper \cite{MazyaS-05}, Maz'ya and Shubin proved a compactness criterion in arbitrary dimension. Our result here goes back to the second named author's Habilitationsschrift \cite{Stollmann-94b} that gives a criterion for regular Dirichlet forms with ultracontractive semigroups, a setup that is much more general than the Laplacian in euclidean space.

Finally, we record the consequences of our main theorem for usual Schrödinger operators on Euclidean space in Section 4. Here the specific geometry gives a particularly nice sufficient condition for Schrödinger semigroups to be compact.

Compactness is one of the great concepts of analysis and it is a most comforting fact to learn that some operator is compact. But that is quite often hard to establish. In our  investigation below we take advantage of a smaller class of operators that is easier to deal with - the Hilbert-Schmidt operators - one of the great gifts of Erhard Schmidt, \cite{Schmidt-08}, to mankind.
\section{Relative spectral compactness and all that}

In this section, $\calH$ is a Hilbert space and $H$ some selfadjoint operator on $\calH$. The following notion is very useful in perturbation theory, see \cite{ReedS-78, Weidmann-80, Weidmann-00}; we need a rather easy special case, where the ``perturbation'' $B$ is bounded. We write $\calL = \calL(\calH)$ for the bounded operators and $\calK = \calK(\calH)$ for the ideal of compact operators, which is, of course a norm-closed subspace of $\calL$.

\begin{definition}
 An operator $B \in \calL(\calH)$ is called $H$-\emph{relatively compact} if $B \One_I(H) \in \calK(\calH)$ for every compact segment (bounded Borel set) $I \subset \RR$.
\end{definition}

Here, we use $\One_M$ to denote the indicator function of a set $M$ and, of course, the spectral theorem. As we will see below, instead of these indicator functions (or spectral projections) we could have used a variety of bounded functions of the operator.

\begin{proposition}\label{prop:gH_locspeccomp}
 Let $H$ be selfadjoint and $B \in \calL$.
 \begin{enumerate}[(1)]
  \item The following assertions are equivalent:
   \begin{enumerate}[(i)]
    \item $B$ is $H$-relatively compact.
    \item There is some $\varphi \in C(\sigma(H))$ with $| \varphi | > 0$ s.t. $B \varphi(H) \in \calK$.
    \item For all $\varphi \in C_0(\sigma(H)) : B \varphi(H) \in \calK$.
   \end{enumerate}
  \item Let $g \in C(\sigma(H), \RR)$ with $g(t) \rightarrow \infty$ as $| t | \rightarrow \infty$. Then $B$ is $g(H)$- relatively compact whenever $B$ is $H$-relatively compact.
 \end{enumerate}
\end{proposition}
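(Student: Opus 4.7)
The plan is to establish part (1) by the cycle $(i) \Rightarrow (iii) \Rightarrow (ii) \Rightarrow (i)$, relying throughout on the bounded Borel functional calculus and the fact that $\calK$ is a closed two-sided ideal in $\calL$.

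For $(i) \Rightarrow (iii)$, given $\varphi \in C_0(\sigma(H))$ and $\varepsilon > 0$, I would pick a bounded Borel set $I \subset \RR$ outside of which $|\varphi| < \varepsilon$, so that $\varphi \One_I \to \varphi$ uniformly as $I$ grows. Functional calculus gives $B(\varphi \One_I)(H) = B \One_I(H) \, \varphi(H)$, which is compact by (i) composed with a bounded operator; passing to the norm limit and using that $\calK$ is norm-closed yields $B\varphi(H) \in \calK$. The implication $(iii) \Rightarrow (ii)$ is trivial once one exhibits a positive element of $C_0(\sigma(H))$: for instance $\varphi(t) = (1+t^2)^{-1}$ works whether $\sigma(H)$ is bounded or not.

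The crucial step is $(ii) \Rightarrow (i)$. Given $\varphi \in C(\sigma(H))$ with $|\varphi|>0$ everywhere and $B \varphi(H) \in \calK$, and any bounded Borel $I \subset \RR$, I want to write $\One_I$ as $\varphi \cdot \psi$ for a bounded Borel function $\psi$ on $\sigma(H)$. The natural choice is $\psi = \One_I / \varphi$, and its boundedness is exactly where I expect the main (mild) obstacle: one must observe that $\overline{I \cap \sigma(H)}$ is a compact subset of $\sigma(H)$ (since $\sigma(H)$ is closed in $\RR$ and $I$ is bounded), on which the continuous function $|\varphi|$ attains a positive minimum. Hence $\psi$ is a bounded Borel function, and the functional calculus yields $\One_I(H) = \varphi(H)\, \psi(H)$, so $B\One_I(H) = (B\varphi(H))\,\psi(H) \in \calK$.

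For part (2), I would reduce to part (1)(i). If $J \subset \RR$ is a bounded Borel set, functional calculus gives $\One_J(g(H)) = \One_{g^{-1}(J)}(H)$, where $g^{-1}(J) \cap \sigma(H)$ is bounded in $\RR$ because the growth condition $g(t) \to \infty$ as $|t| \to \infty$ on $\sigma(H)$ forbids an unbounded sequence mapping into the bounded set $J$. Thus $g^{-1}(J)$ is a bounded Borel set to which $H$-relative compactness applies directly, so $B \One_J(g(H)) \in \calK$, proving that $B$ is $g(H)$-relatively compact.
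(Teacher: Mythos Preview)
Your proof is correct and follows essentially the same route as the paper: the same cycle of implications in part (1), the same approximation $\varphi(H)\One_{[-n,n]}(H)\to\varphi(H)$ in norm for $(i)\Rightarrow(iii)$, the same factorization $B\One_I(H)=B\varphi(H)\cdot(\One_I/\varphi)(H)$ for $(ii)\Rightarrow(i)$, and the same observation $\One_J(g(H))=\One_{g^{-1}(J)}(H)$ with $g^{-1}(J)$ bounded for part (2). Your treatment of $(ii)\Rightarrow(i)$ is in fact slightly more explicit than the paper's, which simply notes that $\tfrac{1}{\varphi}\in C(\sigma(H))$ and hence $(\One_I\cdot\tfrac{1}{\varphi})(H)$ is bounded, whereas you spell out the compactness argument for the positive lower bound of $|\varphi|$ on $\overline{I\cap\sigma(H)}$.
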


\begin{proof} (1):  The implication (iii) $\Rightarrow$ (ii) is clear, as there obviously exist $\varphi \in C_0 (\sigma (H))$ with $\varphi >0$.

(ii) $\Rightarrow$  (i):  Just note that $\frac{1}{\varphi} \in C(\sigma(H))$ by assumption on $\varphi$ and, therefore, $(\One_I \frac{1}{\varphi})(H)$ is bounded. This gives
 \[ B \One_I(H) = B \varphi(H) (\One_I \frac{1}{\varphi})(H) \in \calK. \]

(i) $\Rightarrow$ (iii):  Note that by the functional calculus and since $\varphi \in C_0$,
 \[ \varphi(H) = \| \cdot \|-\lim_{n \rightarrow \infty} \varphi(H) \One_{[-n, n]}(H) \]
 \[ \Rightarrow B \varphi(H) = \| \cdot \|-\lim_{n \rightarrow \infty} B  \One_{[-n, n]}(H) \varphi(H) \in \calK. \]
 
 (2) For $I \subset \RR$ it is clear that $\One_I(g(H)) = \One_{g^{-1}(I)}(H)$ and by the assumption on $g$, the set $g^{-1}(I)$ is bounded.
\end{proof}

This gives easily the following result, where, of course, $D(H)$ denotes the domain of $H$ and is a Hilbert space endowed with the \emph{graph norm} $\| u \|_H^2 = \| u \|^2 + \| H u \|^2$. We also write $Q(H)$ for the form domain of $H$ in case $H \geq \gamma$, i.e. $H$ is semibounded below. Recall that in this case, $h[u, v] = (H u|v)$ is a closed form, when we take as a domain $D(h) = Q(H) = D((H+s)^{1/2})$ where $ s > -\gamma$ can be chosen arbitrarily. In analogy with the usual Sobolev spaces on $\RR^d$ and $H = -\Delta$, it is suggestive to write
\[ \calH^p(H) = D((H+s)^{p/2}), \]
so that $\calH^2(H) = D(H)$ and $\calH^1(H) = Q(H)$. Of course, these spaces are endowed with the respective graph norms and continuously embedded in $\calH$.

\begin{theorem}\label{cor:equiv_locspeccomp}
 The following are equivalent:
 \begin{enumerate}[(i)]
  \item $B$ is $H$-relatively compact,
  \item $B(H-\lambda)^{-k} \in \calK$ for some (all) $\lambda \in \rho(H), k \in \NN$,
  \item $B : D(H) \rightarrow \calH$ is compact.
 \end{enumerate}
 If $H \geq \gamma$ these conditions are in turn equivalent to each of the following:
 \begin{enumerate}[(i)] \setcounter{enumi}{3}
  \item $B e^{-t H}$ is compact for some (all) $t > 0$,
  \item $B:Q(H) \rightarrow \calH$ is compact,
  \item $B:\calH^p(H) \rightarrow \calH$ is compact for some (all) $p > 0$,
  \item for any $ C > 0$ and $(\psi_n) \subset Q(H)$ with $h[\psi_n] \leq C$ and $\psi_n \rightarrow 0$ weakly, it follows that $B\psi_n \rightarrow 0$ in norm.
 \end{enumerate}
\end{theorem}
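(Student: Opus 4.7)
The plan is to reduce every equivalence to Proposition~\ref{prop:gH_locspeccomp} by exhibiting, for each of (ii), (iv), (v), (vi), an explicit strictly positive $\varphi \in C_0(\sigma(H))$ so that the condition in question becomes exactly $B\varphi(H) \in \calK$. Once that is in place, the passage between this operator statement and compactness of $B$ viewed as a map out of a graph-norm Hilbert space (conditions (iii), (v), (vi)) is immediate, since $\varphi(H)$ realises a topological isomorphism between $\calH$ and the appropriate space.

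Concretely, for (i) $\Leftrightarrow$ (ii) I would take $\varphi(t) = (t-\lambda)^{-k}$ with $\lambda \in \rho(H)$: this is continuous and strictly positive on $\sigma(H)$ and vanishes at infinity. For (iv) the choice is $\varphi(t) = e^{-t}$, which under the assumption $H \geq \gamma$ lies in $C_0(\sigma(H))$ (as $\sigma(H) \subset [\gamma,\infty)$) and is strictly positive. For (v) and (vi) I would take $\varphi(t) = (t+s)^{-p/2}$ with $s > -\gamma$; again this is in $C_0(\sigma(H))$ and positive. In each case Proposition~\ref{prop:gH_locspeccomp} applied with this $\varphi$ gives the equivalence with (i). The passage from $B(H-\lambda)^{-1} \in \calK$ to compactness of $B\colon D(H) \to \calH$ is then just the observation that $(H-\lambda)^{-1}\colon \calH \to D(H)$ is a homeomorphism with the graph norm, and similarly for (v), (vi) via $(H+s)^{-p/2}\colon \calH \to \calH^p(H)$.

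The most delicate step is (v) $\Leftrightarrow$ (vii). For (v) $\Rightarrow$ (vii), a sequence $(\psi_n)$ with $h[\psi_n] \leq C$ and $\psi_n \rightharpoonup 0$ in $\calH$ is automatically bounded in $Q(H)$, and along any weakly convergent subsequence in $Q(H)$ the weak $Q(H)$-limit must agree with the weak $\calH$-limit by continuity of the embedding, hence equals $0$; compactness of $B\colon Q(H) \to \calH$ then gives $B\psi_n \to 0$ in norm along every such subsequence, and a standard metric-space argument promotes this to convergence along the full sequence. The converse (vii) $\Rightarrow$ (v) follows by extracting from a $Q(H)$-bounded sequence a weakly convergent subsequence with limit $\psi$ and applying (vii) to $\psi_n - \psi$. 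The main obstacle I anticipate is precisely this bookkeeping between the weak topologies of $\calH$ and $Q(H)$, which however is standard once one notes that weak limits survive continuous embeddings.
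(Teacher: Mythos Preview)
Your argument is correct and follows the same route as the paper: both reduce (ii)--(vi) to Proposition~\ref{prop:gH_locspeccomp} via explicit functions of $H$, and both pass between the operator formulation $B\varphi(H)\in\calK$ and the graph-norm formulation by noting that $\varphi(H)$ is a topological isomorphism onto the relevant domain space. The only cosmetic difference is that for (v) and (vi) the paper invokes part~(2) of Proposition~\ref{prop:gH_locspeccomp} (replacing $H$ by $g(H)=(H+s)^{p/2}$ and reducing to the already-proven (iii)), whereas you go directly through part~(1) with $\varphi(t)=(t+s)^{-p/2}$; these are the same thing unwound. For (vii) the paper simply declares it ``a simple reformulation of (v)'', so your more careful treatment of the interplay between weak convergence in $\calH$ and in $Q(H)$ is if anything more complete. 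One small wording issue: for general self-adjoint $H$ the resolvent parameter $\lambda$ may have to be non-real, so $(t-\lambda)^{-k}$ is complex-valued rather than ``strictly positive''; what you actually need (and what Proposition~\ref{prop:gH_locspeccomp} requires) is only $|\varphi|>0$.
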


\begin{remark}
 The equivalence (i) $\Leftrightarrow$ (ii) is essentially contained in \cite{Weidmann-00} Theorem 9.17.
\end{remark}

\begin{proof}
 The proof is an easy consequence of Proposition \ref{prop:gH_locspeccomp}. Here are the details.

 The equivalence between (i) and (ii) is immediate from part (1) of the proposition. Moreover, clearly, $B:D(H) \rightarrow \calH$ can be written as $B(H-\lambda)^{-1}(H-\lambda)$ for some $\lambda \in \rho(H)$, where the first factor is bounded from $(D(H), \| \cdot \|_H)$ to $\calH$. This gives (ii) $\Longrightarrow$ (iii). Conversely, $(H+\lambda)^{-1}$ is a bounded operator from $\calH$ to $D(H)$ and (iii)$\Longrightarrow $ (ii) follows.

 Note that $\varphi(x) = e^{-t x}$ belongs to $C_0 (\sigma (H))$ and the equivalence of (iv) to, say, (i) follows from (1) of the previous proposition.  The statements in (v) and (vi) are just statement (iii) with $H$ replaced by $g(H)$ for $g(t) = (t+s)^{p/2}$. As $g$ has an inverse function (which again tends to $\infty$ for $t\to \infty$), the equivalence of (v) and (vi) to (i) follows from part (2) of the previous proposition.

 Finally, (vii) is a simple reformulation of (v).
\end{proof}

\begin{corollary}\label{cor:equiv_emptyess}
 For $H$ selfadjoint, the following are equivalent:
 \begin{enumerate}[(i)]
  \item $Id$ is $H$-relatively compact,
  \item $\sigma_{ess}(H) = \emptyset$,
  \item $(H-\lambda)^{-1} \in \calK$ for some (all) $\lambda \in \rho(H)$.
 \end{enumerate}
 If $H \geq \gamma$ then these conditions are in turn equivalent to each of the following:
 \begin{enumerate}[(i)] \setcounter{enumi}{3}
  \item $e^{-t H} \in \calK$ for some (all) $t > 0$,
  \item $Id : Q(H) \rightarrow \calH$ is compact,
  \item $Id : \calH^p(H) \rightarrow \calH$ is compact for some (all) $p > 0$,
  \item for any $ C > 0$ and $(\psi_n) \subset Q(H)$ with $h[\psi_n] \leq C$ and $\psi_n \rightarrow 0$ weakly, it follows that $\psi_n \rightarrow 0$ in norm.
 \end{enumerate}
\end{corollary}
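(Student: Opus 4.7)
My plan is to deduce this corollary directly from Theorem \ref{cor:equiv_locspeccomp} by specializing $B = Id$, supplemented by a short hand-check of the one genuinely new equivalence (i) $\Leftrightarrow$ (ii).

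Setting $B = Id$ in Theorem \ref{cor:equiv_locspeccomp}, the condition ``$B$ is $H$-relatively compact'' becomes (i) of the corollary, while $B(H-\lambda)^{-k} \in \calK$ becomes (iii), $B e^{-tH} \in \calK$ becomes (iv), and the compactness statements for $B$ as a map out of $D(H)$, $Q(H)$ and $\calH^p(H)$ become (v), (vi) verbatim, with (vii) being, as in the theorem, just the sequential reformulation of (v). So as soon as (i) $\Leftrightarrow$ (ii) is established, the entire chain of equivalences transfers to the corollary without further work.

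For (i) $\Leftrightarrow$ (ii) the plan is to invoke the standard characterization: $\lambda \in \sigma_{ess}(H)$ iff $\One_{(\lambda-\varepsilon,\lambda+\varepsilon)}(H)$ has infinite rank for every $\varepsilon > 0$. Under (i), every spectral projection on a bounded Borel set is both compact and an orthogonal projection, hence of finite rank, so no point of $\sigma(H)$ can be essential. Conversely, if $\sigma_{ess}(H) = \emptyset$ then $\sigma(H)$ consists of isolated eigenvalues of finite multiplicity, and any bounded Borel set $I$ meets $\sigma(H)$ in only finitely many such eigenvalues --- otherwise an accumulation point would lie in $\sigma_{ess}(H)$ --- so $\One_I(H)$ is a finite sum of finite-rank projections, in particular compact.

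The main ``obstacle'' is really only bookkeeping: matching the seven conditions of the corollary against the corresponding conditions of the theorem under $B = Id$ and recalling the spectral-projection description of the essential spectrum. No substantively new analytic input is needed beyond what is already packaged in Theorem \ref{cor:equiv_locspeccomp} and Proposition \ref{prop:gH_locspeccomp}.
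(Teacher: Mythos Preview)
Your proposal is correct and matches the paper's approach: the corollary is obtained by specializing $B = Id$ in Theorem \ref{cor:equiv_locspeccomp}, with the paper simply noting that the remaining equivalence (ii) is ``basically well-known'' (citing Reed--Simon, Theorem XIII.64). Your spectral-projection argument for (i) $\Leftrightarrow$ (ii) is the standard one and in fact supplies more detail than the paper itself gives.
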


Of course this latter is basically well-known, see, e.g., \cite{ReedS-78}, Theorem XIII.64, p. 245. The equivalence of (i) and (vii) in the above corollary immediately gives:

\begin{example}[\cite{KulczyckiS-06} Theorem 1.1 part two]
 Let $H_0 \geq 0$ be a translation invariant, selfadjoint operator on $\RR^d$, $Q(H_0) \cap L^2(B_r(0)) \neq \emptyset$ for some $r > 0$ and $0 \leq V^+ \leq M < \infty$ on a sequence of disjoint balls with radius $r$. Then $\sigma_{ess}(H_0 + V^+) \neq \emptyset$.
\end{example}

We close this section with noting two simple stability results for emptyness of the essential spectrum.

\begin{corollary}
 Let $H$ be selfadjoint and nonnegative, $h$ the associated form and $\mu$ be a sesquilinear form.

 (a) If $\mu$ is form small with respect to $h$ i.e. form bounded with bound less than one, then
 \[ \sigma_{ess}(H) = \emptyset \Leftrightarrow \sigma_{ess}(H + \mu) = \emptyset. \]

 (b) If $\mu$ is nonnegative and  $h+\mu$ is closed, then $\sigma_{ess} (H+ \mu) =  \emptyset$ if $\sigma_{ess} (H) = \emptyset$.
\end{corollary}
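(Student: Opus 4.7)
The plan is to use the sequential characterization (vii) of Corollary \ref{cor:equiv_emptyess}, which states that $\sigma_{ess}(A) = \emptyset$ precisely when every sequence $(\psi_n)$ in the form domain $Q(A)$ with bounded form values and $\psi_n \to 0$ weakly in $\calH$ necessarily converges to $0$ in norm. Both parts of the corollary then reduce to transferring this criterion between the forms $h$ and $h+\mu$.

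For part (b), I would argue directly. Since $h + \mu$ is assumed closed, its form domain $Q(h+\mu) = Q(h) \cap Q(\mu)$ is automatically contained in $Q(h)$. Given a test sequence $(\psi_n) \subset Q(h+\mu)$ with $(h+\mu)[\psi_n] \leq C$ and $\psi_n \to 0$ weakly, the nonnegativity of $\mu$ yields $h[\psi_n] \leq (h+\mu)[\psi_n] \leq C$, so $(\psi_n)$ is admissible as a test sequence for (vii) applied to $H$. Assuming $\sigma_{ess}(H) = \emptyset$ we infer $\psi_n \to 0$ in $\calH$, and reapplying (vii), now to $H+\mu$, yields $\sigma_{ess}(H+\mu) = \emptyset$.

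For part (a), the KLMN theorem supplies the crucial input: form smallness with bound $a < 1$ ensures that $h+\mu$ is closed and bounded below with $Q(h+\mu) = Q(h)$, and one has the two-sided estimate
\[ (1-a)\, h[\psi] - b\|\psi\|^2 \;\leq\; (h+\mu)[\psi] \;\leq\; (1+a)\, h[\psi] + b\|\psi\|^2 \]
on this common domain. Consequently, a sequence is form-bounded with respect to $h$ if and only if it is form-bounded with respect to $h+\mu$, so the sequential condition (vii) for $H$ and the one for $H+\mu$ translate into one another, delivering both implications at once.

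The only bookkeeping point that needs care in part (a) is that a weakly convergent sequence in $\calH$ is automatically norm-bounded, which is what keeps the $b\|\psi_n\|^2$ term in the KLMN estimate under control when moving between the two notions of form-boundedness. I do not expect a serious obstacle; the entire argument is an exercise in matching the form domains and norms, after which Corollary \ref{cor:equiv_emptyess} does the work.
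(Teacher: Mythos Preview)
Your argument is correct and follows essentially the same route as the paper: both proofs rest on the observation that the form norms of $h$ and $h+\mu$ are comparable (two-sidedly in part (a), one-sidedly in part (b)), and then invoke Corollary \ref{cor:equiv_emptyess}. The only cosmetic difference is that the paper phrases this via condition (v), i.e.\ comparing unit balls in $Q(H)$ and $Q(H+\mu)$, whereas you use the sequential reformulation (vii); these are the same idea.
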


\begin{proof} This follows easily by comparing unit balls in $Q(H)$ and $Q(H+\mu)$ and
 considering condition (v) in the previous corollary.
\end{proof}

\section{Schrödinger semigroups}

We will now assume $\calH = L^2(X)$, where $(X, \calB, m)$ is some measure space, and $H_0 \geq \gamma$.

We will study perturbations $H = H_0 + V$ where $V$ is a function on $X$ which is at the same time regarded as the operator of multiplication with this function. $H$ is defined via its quadratic form in the usual way: Assume that $V = V_+ - V_-$ where $V_+:X \rightarrow [0, \infty]$ is measurable and $V_-:X \rightarrow [0, \infty)$ is measurable. We first define the closed form of $H_0 + V_+$ as the form sum with form domain $Q(H_0 + V_+) = D(H_0^{1/2}) \cap D(V_+^{1/2})$, which might not be dense but that does not pose a problem. The associated selfadjoint operator is simply defined on the possibly smaller Hilbert space $\overline{Q(H_0+V_+)}$, the closure taken in $\calH$. For $V_-$ we require that it is form small w.r.t. $H_0 + V_+$, i.e., there are some $q < 1$ and $C_q \in \RR$ such that
\[ (V_- u \mid u) \le q \left( ( H_0 + V_+ ) u \mid u \right) + C_q \| u \|^2 \]
for all $u$. Then $H_0 + V_+ - V_-$ can be defined by the KLMN theorem (\cite{ReedS-75}, Theorem X.17) and we have $Q(H_0 + V_+ - V_-) = Q(H_0 + V_+)$.

The reader might have noticed that we didn't require $V_+, V_-$ to be the actual positive and negative parts of $V$ (thanks to Vitali Liskevich for pointing this out). Moreover, our assumption is obviously weaker than the usual assumption that $V_-$ is form small w.r.t. $H_0$. Here is our general theorem.

\begin{theorem}\label{thm:main1}
 Let $H_0, V$ and $H$ be as above. Assume that $\One_{\{V_+ < s\}}$ is $(H_0 + V_+)$-relatively-compact for some $s \in \RR$. Then $\sigma_{ess}(H) \subset [(1-q)(\gamma+s) - C_q, \infty)$.

In particular, if $\One_{\{V_+ \leq n\}}$ is $(H_0 + V_+)$-relatively compact for all $n \in \NN$, then $\sigma_{ess}(H) = \emptyset$, or, equivalently, $e^{-t H} \in \calK$ for all $t > 0$.
\end{theorem}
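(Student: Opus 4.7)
The plan is to use the characterization of relative compactness via weakly null, form-bounded sequences (condition (vii) of Theorem \ref{cor:equiv_locspeccomp}) together with a singular sequence argument for the essential spectrum. Set $a:=(1-q)(\gamma+s)-C_q$ and suppose, toward a contradiction, that some $\lambda<a$ lies in $\sigma_{ess}(H)$; pick a Weyl sequence $(\psi_n)\subset D(H)$ with $\|\psi_n\|=1$, $\psi_n\rightharpoonup 0$ in $\calH$ and $\|(H-\lambda)\psi_n\|\to 0$.

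First I would translate the Weyl condition into form estimates. Writing $h$ and $h_+$ for the forms of $H$ and $H_0+V_+$, the form-smallness of $V_-$ reads $(1-q)h_+[u]\leq h[u]+C_q\|u\|^2$. Since $h[\psi_n]=\lambda+((H-\lambda)\psi_n\mid\psi_n)\to\lambda$, this yields a uniform bound on $h_+[\psi_n]$ with $\limsup h_+[\psi_n]\leq(\lambda+C_q)/(1-q)$. Combined with $h_+[\psi_n]\geq\gamma\|\psi_n\|^2+(V_+\psi_n\mid\psi_n)$, this gives
\[ \limsup_{n}(V_+\psi_n\mid\psi_n)\leq\frac{\lambda+C_q}{1-q}-\gamma<s, \]
the strict inequality being equivalent to $\lambda<a$.

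Next I would split $\psi_n=\One_{\{V_+<s\}}\psi_n+\One_{\{V_+\geq s\}}\psi_n$ and estimate both pieces. The first summand tends to zero in norm: since $(h_+[\psi_n])$ is bounded, $\psi_n\rightharpoonup 0$, and $\One_{\{V_+<s\}}$ is $(H_0+V_+)$-relatively compact, this is exactly the conclusion of condition (vii) of Theorem \ref{cor:equiv_locspeccomp} applied with $H:=H_0+V_+$ and $B:=\One_{\{V_+<s\}}$. For the second summand, assuming $s>0$, the pointwise bound $\One_{\{V_+\geq s\}}\leq V_+/s$ gives
\[ \|\One_{\{V_+\geq s\}}\psi_n\|^2\leq\frac{1}{s}(V_+\psi_n\mid\psi_n), \]
whose $\limsup$ is strictly less than one by the previous display. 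Adding both contributions yields $\limsup\|\psi_n\|^2<1$, contradicting $\|\psi_n\|=1$.

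Two loose ends remain. The case $s\leq 0$ is essentially vacuous: $\{V_+<s\}=\emptyset$, so the assumption carries no content, and the claimed bound is weaker than the elementary lower bound $\sigma(H)\subset[(1-q)\gamma-C_q,\infty)$ that one reads off from the form-smallness inequality. For the ``in particular'' part, apply the first statement with $s=n+1\in\NN$; the factorisation $\One_{\{V_+<n+1\}}=\One_{\{V_+<n+1\}}\One_{\{V_+\leq n+1\}}$ together with the ideal property of $\calK$ shows that $\One_{\{V_+<n+1\}}$ is still $(H_0+V_+)$-relatively compact, so letting $n\to\infty$ yields $\sigma_{ess}(H)=\emptyset$, and the equivalent formulation $e^{-tH}\in\calK$ is then Corollary \ref{cor:equiv_emptyess}. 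The main subtlety I expect is just keeping the form bookkeeping clean once $V_-$ is present and checking that the Weyl sequence actually sits in $Q(H_0+V_+)$ with controlled form energy, which is automatic from $D(H)\subset Q(H)=Q(H_0+V_+)$.
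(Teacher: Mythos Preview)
Your argument is correct and uses essentially the same ingredients as the paper's proof: a Weyl sequence for $\lambda\in\sigma_{ess}(H)$, the form-smallness inequality to pass between $h$ and $h_+$, and condition~(vii) of Theorem~\ref{cor:equiv_locspeccomp} to kill $\One_{\{V_+<s\}}\psi_n$. The only cosmetic difference is that the paper argues directly rather than by contradiction: starting from an arbitrary $\lambda\in\sigma_{ess}(H)$ it splits $(V_+ f_n\mid f_n)=(V_+\One_E f_n\mid f_n)+(V_+\One_{E^c}f_n\mid f_n)$ and reads off $\lambda\geq (1-q)(\gamma+s)-C_q$ in one chain of inequalities, which avoids the separate treatment of $s\leq 0$ and the division by $s$ in your step~7.
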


Theorem \ref{thm:main1} part 1 can be seen as a consequence of a well-known stability result about the essential spectrum under relatively compact perturbations, see \cite{Weidmann-00} Theorem 9.16.

\begin{proof}
 For $\lambda \in \sigma_{ess}(H)$ choose a Weyl sequence $f_n \rightarrow 0$ weakly, $\| f_n \| = 1$ and $\| H f_n - \lambda f_n \| \leq \frac{1}{n}$. Set $E := \{V_+ < s\}$. Since $(f_n)_{n\in\NN}$ is bounded with respect to the form norm, $\One_E f_n$ converges to zero by Theorem \ref{cor:equiv_locspeccomp}.
 We then have
 \begin{align*}
  \lambda &= \lim_{n\to\infty} \left((H_0+V_+-V_-)f_n\mid f_n\right) \\
   &= \lim_{n\to\infty}\left[\left( (H_0+V_+)f_n\mid f_n\right)- \left(V_-f_n\mid f_n\right)\right]
 \end{align*}
 and can therefore estimate
 \begin{align*}
  \lambda  &\ge \limsup \left[\left( (H_0+V_+)f_n\mid f_n\right) - \left( q  \left( (H_0+V_+)f_n\mid f_n\right)+C_q \| f_n\|^2\right)\right] \\
   &\ge \limsup \left[(1-q)\gamma +(1-q)\left( V_+(\One_E+\One_{X\setminus E})f_n\mid f_n\right) -C_q\right] \\
   &\ge \limsup \left[(1-q)\gamma +(1-q)\left( V_+\One_{X\setminus E}f_n\mid f_n\right) -C_q\right] \\
   &\ge \limsup \left[(1-q)\gamma +(1-q) s \left(\One_{X\setminus E}f_n\mid f_n\right) -C_q\right] \\
   &\ge (1-q)\gamma + (1-q)s-C_q.
 \end{align*}
 Here, we used convergence of $\One_E f_n$ to zero in the last step. The ``in particular'' assertion is now clear.
\end{proof}

\begin{remark} The reasoning given in the proof is quite flexible. It can easily be adopted to show e.g. the following statement: Let $H_0$ be as above, $\mu^+$ be a Hermitian, positive semidefinite, bilinear form such that $h_0 + \mu^+$ is closed, $\mu^-$ form small w.r.t. $h_0 + \mu^+$ and $H$ defined via the form sum. If for all $n \in \NN$ there exists an $M_n \in \calB$ with $\One_{M_n}$ being $(H_0 + \mu^+)$-relatively compact and $n \| \One_{M_n^c} u \|^2 \leq \mu^+(u, u) \;\forall u \in Q(H)$. Then $\sigma_{ess}(H) = \emptyset$.
\end{remark}

The important feature in the following corollary is that the assumption concerning relative compactness is phrased in terms of the operator $H_0$ and can so be checked easily in applications.

\begin{corollary}
 Let $H_0 \geq 0$, $V_+ \geq 0$ be as above, $V_-$ form small w.r.t. $g(H_0)+V_+$, where $g:[0, \infty) \rightarrow [0, \infty)$ satisfies $g(x) \rightarrow \infty$ as $x \rightarrow \infty$. Assume that $\One_{\{V_+ \leq n\}}$ is $H_0 $-relatively compact for all $n \in \NN$. Then $\sigma_{ess}(g(H_0) + V) = \emptyset$.
\end{corollary}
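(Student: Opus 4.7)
The plan is to reduce the Corollary to the ``in particular'' part of Theorem \ref{thm:main1} applied with $H_0$ replaced by $g(H_0)$. To make that application legal, I need to upgrade the hypothesis of $H_0$-relative compactness of $\One_{\{V_+\leq n\}}$ to $(g(H_0)+V_+)$-relative compactness of the same operator, for every $n\in\NN$.

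The first step is to pass from $H_0$ to $g(H_0)$. Since $g:[0,\infty)\to[0,\infty)$ is continuous with $g(x)\to\infty$ as $x\to\infty$, part (2) of Proposition \ref{prop:gH_locspeccomp} applies directly and yields that $\One_{\{V_+\leq n\}}$ is $g(H_0)$-relatively compact for every $n\in\NN$. By the equivalence (i)$\Leftrightarrow$(v) in Theorem \ref{cor:equiv_locspeccomp}, this is the same as saying that multiplication by $\One_{\{V_+\leq n\}}$ is compact as an operator from $Q(g(H_0))$ into $\calH$.

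The second step is to add $V_+$. The form of $g(H_0)+V_+$ is just the form sum, so $Q(g(H_0)+V_+)=Q(g(H_0))\cap D(V_+^{1/2})$, and because $V_+\geq 0$ the inclusion
\[ Q(g(H_0)+V_+)\hookrightarrow Q(g(H_0)) \]
is continuous (the form norm on the left dominates the one on the right). Composing this continuous inclusion with the compact operator from the previous step shows that $\One_{\{V_+\leq n\}}:Q(g(H_0)+V_+)\to\calH$ is compact, which by (v)$\Rightarrow$(i) of Theorem \ref{cor:equiv_locspeccomp} means that $\One_{\{V_+\leq n\}}$ is $(g(H_0)+V_+)$-relatively compact for every $n\in\NN$.

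The third step is just to invoke Theorem \ref{thm:main1} with $H_0$ replaced by $g(H_0)$. The hypotheses are now all in place: $g(H_0)\geq 0$, $V_-$ is form small with respect to $g(H_0)+V_+$ by assumption, and the sublevel set compactness condition was verified in the second step. The ``in particular'' conclusion of Theorem \ref{thm:main1} then gives $\sigma_{ess}(g(H_0)+V)=\emptyset$. I don't anticipate a real obstacle here; the only point to watch is the continuity of the form-domain inclusion $Q(g(H_0)+V_+)\hookrightarrow Q(g(H_0))$, which depends on $V_+\geq 0$ and is what lets us transport compactness from $g(H_0)$ to $g(H_0)+V_+$ essentially for free.
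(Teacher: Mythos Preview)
Your proof is correct and follows the same route as the paper: apply Proposition~\ref{prop:gH_locspeccomp}(2) to pass from $H_0$-relative compactness to $g(H_0)$-relative compactness, then invoke Theorem~\ref{thm:main1} with $g(H_0)$ in place of $H_0$. You are in fact more careful than the paper in one respect: Theorem~\ref{thm:main1} literally requires $(g(H_0)+V_+)$-relative compactness of the sublevel-set indicators, and you supply this via the continuous inclusion $Q(g(H_0)+V_+)\hookrightarrow Q(g(H_0))$, a step the paper leaves implicit when it simply says ``the preceding theorem applies.''
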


\begin{proof}
 Use Proposition \ref{prop:gH_locspeccomp} (2) to see that $\One_{\{V \leq n\}}$ is $g(H_0)$-relatively compact under the assumptions of the corollary. Then, the preceding theorem applies.
\end{proof}

\begin{remark}
 This gives a substantial generalization of Corollary 1.4 from \cite{WangW-08}, where $g$ was supposed to be a Bernstein function.
\end{remark}

\begin{definition}
 We say that $H_0$ is \emph{spatially locally compact} if
 \[ \One_E \One_I(H_0) \in \calK(L^2(X, \calB, m)) \]
 for every compact $I \subset \RR$ and every $E \in \calB$ with $m(E) < \infty$.
\end{definition}

Of course $H_0$ is spatially locally compact iff $\One_E$ is $H_0$-relatively compact for every $E \in \calB$ with $m(E) < \infty$. At the same time, there are many instances, where spatial local compactness is well established. Our main theorem gives the following immediate consequence.

\begin{corollary}\label{cor:spatial}
 Assume that $H_0$ is spatially locally compact, $V,H$ as above. Assume that $m(\{ V_+\le n\})<\infty$ for all $n\in\NN$. Then $\sigma_{ess}(H) = \emptyset$, or, equivalently, $e^{-t H} \in \calK$ for all $t > 0$.
\end{corollary}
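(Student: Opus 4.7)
The plan is to reduce everything to Theorem \ref{thm:main1}: I need to verify, in the present setting, that $\One_{\{V_+ \le n\}}$ is $(H_0+V_+)$-relatively compact for every $n \in \NN$, and then the ``in particular'' part of Theorem \ref{thm:main1} directly yields $\sigma_{ess}(H) = \emptyset$. The equivalence with $e^{-tH}\in\calK$ is then just Corollary \ref{cor:equiv_emptyess}.

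First, set $E_n := \{V_+ \le n\}$. By assumption, $m(E_n) < \infty$, so the definition of spatial local compactness together with the observation following it says exactly that $\One_{E_n}$ is $H_0$-relatively compact. Via Theorem \ref{cor:equiv_locspeccomp}, I rephrase this as compactness of the map $\One_{E_n} : Q(H_0) \to \calH$.

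The only step requiring an argument is the transition from $H_0$ to $H_0 + V_+$. For this I use the elementary form-norm inequality
\[
\|u\|_{H_0+V_+}^2 \;=\; \|u\|^2 + h_0[u] + \int V_+ |u|^2\, \rmd m \;\ge\; \|u\|_{H_0}^2,
\]
valid on $Q(H_0+V_+) = Q(H_0)\cap Q(V_+)$. This shows that the natural inclusion $Q(H_0+V_+) \hookrightarrow Q(H_0)$ is continuous. Composing it with the compact map $\One_{E_n} : Q(H_0) \to \calH$ yields compactness of $\One_{E_n} : Q(H_0+V_+) \to \calH$, and a second application of Theorem \ref{cor:equiv_locspeccomp}, this time to the operator $H_0+V_+$, gives the required $(H_0+V_+)$-relative compactness of $\One_{E_n}$.

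I do not expect any real obstacle here; the only mildly subtle point is the shift from $H_0$-relative compactness (which is what ``spatially locally compact'' directly delivers) to $(H_0+V_+)$-relative compactness (which is what Theorem \ref{thm:main1} actually needs), and this is handled by the one-line form-norm comparison above. Once the hypothesis is verified for every $n$, Theorem \ref{thm:main1} closes the proof.
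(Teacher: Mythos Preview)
Your proof is correct and follows the paper's intended route: the paper states the corollary as an ``immediate consequence'' of Theorem~\ref{thm:main1} without further argument, and you have simply made explicit the one step the paper leaves tacit, namely the passage from $H_0$-relative compactness (delivered by spatial local compactness) to $(H_0+V_+)$-relative compactness (required by Theorem~\ref{thm:main1}) via the continuous inclusion $Q(H_0+V_+)\hookrightarrow Q(H_0)$.
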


\begin{remark}\label{rem}
 \begin{enumerate}[(1)]
  \item If  $e^{-t H_0} : L^2 \rightarrow L^\infty$ for some $t > 0$, then $H_0$ is spatially locally compact: In fact $\One_E e^{-t H_0}$ factors through $L^\infty$ and the little Grothendieck theorem gives that it is a Hilbert-Schmidt operator, in particular compact. See the discussion in \cite{Stollmann-94a}, or \cite{DemuthSSV-95} for the case of positivity preserving semigroups.

  Therefore, our Corollary \ref{cor:spatial} contains Theorem 2 from \cite{Simon-08} and Cor. 1.2. from \cite{WangW-08} as special cases. It seems that our proof is shorter and easier than Simon's, \cite{Simon-08}, which is in turn much more elementary than the proof of Wang and Wu \cite{WangW-08}. In \cite{Simon-08}, Theorem 2.2 there is some additional information on semigroups: For positive selfadjoint operators $A$ and $B$:
  \[ e^{-A}e^{-B}\in\calK \Longrightarrow e^{-(A+B)}\in\calK. \]
 But this can also be deduced along the lines above: By Theorem \ref{cor:equiv_locspeccomp} the assumption implies that $e^{-A}$ is $B$-relatively compact. Therefore, $\One_I(A)$ is $B$-relatively compact for any bounded $I\subset \RR$ and this gives the desired compactness.

 \item Note that the semigroups involved need not be positivity preserving, so $H_0$ may well come from some elliptic partial differential operator of higher order. Note also that $e^{-t(H_0 + V)}$ is not required to map $L^2$ ot $L^\infty$. (Thanks again to Vitali Liskevich.)
  \item For $X$ being Euclidean space or a manifold, the required spatial local compactness of $H_0$ is sometimes easily checked in terms of compactness of Sobolev embeddings, i.e. in variants of Rellich's theorem \cite{Kato-66}, Theorem V.4.4, see also the discussion in Section 4 below.
  \item The Laplacian on quantum or metric graphs is spatially locally compact under quite general assumptions, since $D(H_0)$ is continuously embedded in $L^\infty$, see \cite{LenzSS-08}.
  \item For combinatorial graphs, the condition of spatial local compactness is trivially satisfied, as $\One_E$ has finite rank in this case. Therefore we get a rather easy and not very subtle criterion in that case.
 \end{enumerate}
\end{remark}

We now turn to  providing an alternative short proof of a main result (Theorem 1.1) of \cite{WangW-08} within our approach, showing that our result is more general than the latter. The result requires the existence of a kernel as well as the validity of the inequality
\begin{equation}\label{eq:spi}
 \| f \|^2 \leq r \cdot h[f] + \beta(r) \| f \|_1^2\tag{$\star$}
\end{equation}
with some function $\beta$ defined on $[0, \infty)$, called the \emph{super Poincar\'{e} inequality}.

\begin{theorem} Let $H_0\ge 0$ be selfadjoint with an associated form $h$ that satisfies (\ref{eq:spi}). Let $V_+$ be as above and assume that $e^{-t(H_0+V_+)}$ is a substochastic operator with a kernel for some $t>0$. Then $H$ is spatially locally compact. In particular,  if $m(\{ V_+\le n\})<\infty$ for all $n\in\NN$ then $\sigma_{ess}(H) = \emptyset$, or, equivalently, $e^{-t H} \in \calK$ for all $t > 0$.
\end{theorem}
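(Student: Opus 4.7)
My plan is to reduce the theorem to Theorem \ref{thm:main1}, which requires that $\One_{\{V_+\leq n\}}$ be $(H_0+V_+)$-relatively compact for every $n\in\NN$. By Theorem \ref{cor:equiv_locspeccomp}(iv), this is equivalent to compactness of $\One_{\{V_+\leq n\}}\,e^{-t(H_0+V_+)}$ for some (any) $t>0$. Along the lines of Remark \ref{rem}(1), I would derive this from an $L^2\to L^\infty$ bound on $e^{-t(H_0+V_+)}$: the product then factors as $L^2\to L^\infty\to L^2$, the second arrow being multiplication by $\One_{\{V_+\leq n\}}$, which is bounded since $m(\{V_+\leq n\})<\infty$, and the little Grothendieck theorem automatically makes any such factorization Hilbert-Schmidt. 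So the real work is to turn the super Poincar\'e inequality (\ref{eq:spi}) into ultracontractivity of $e^{-t(H_0+V_+)}$.

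As a preliminary, (\ref{eq:spi}) automatically passes from $h$ to the form $h_{V_+}[f]:=h[f]+\int V_+|f|^2\,\rmd m$ of $H_0+V_+$, because $V_+\geq 0$ only enlarges the quadratic form while leaving $\|f\|_1$ alone. For the ultracontractivity, I would run the standard Nash-type argument \`a la F.-Y.~Wang. Setting $f_t:=e^{-t(H_0+V_+)}f$ and plugging (\ref{eq:spi}) for $h_{V_+}$ into the derivative of $\|f_t\|_2^2$ gives
\[ -\frac{\rmd}{\rmd t}\|f_t\|_2^2 \;=\; 2\,h_{V_+}[f_t] \;\geq\; \frac{2}{r}\bigl(\|f_t\|_2^2-\beta(r)\|f_t\|_1^2\bigr) \quad\text{for every } r>0. \]
Substochasticity yields $\|f_t\|_1\leq\|f\|_1$, so optimizing in $r$ produces a Nash-type differential inequality whose integration delivers a quantitative $L^1\to L^2$ bound $\|e^{-t(H_0+V_+)}\|_{1\to 2}\leq M(t)$ for every $t>0$. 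Selfadjointness together with the assumed kernel provides a symmetric integral representation, and the semigroup identity plus duality then upgrade this to the desired $L^2\to L^\infty$ estimate.

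Once the $L^2\to L^\infty$ bound is established, everything else is automatic. Spatial local compactness of $H_0+V_+$ is immediate from the Hilbert-Schmidt factorization recalled in the first paragraph; combined with $m(\{V_+\leq n\})<\infty$, it produces $(H_0+V_+)$-relative compactness of $\One_{\{V_+\leq n\}}$; and the ``in particular'' clause of Theorem \ref{thm:main1} then gives $\sigma_{ess}(H)=\emptyset$, equivalently $e^{-tH}\in\calK$ for all $t>0$. That $H$ itself (and not just $H_0+V_+$) is spatially locally compact is a small addendum: since $V_-$ is form-small with respect to $H_0+V_+$, the form norms on $Q(H)=Q(H_0+V_+)$ are equivalent, so the compactness criterion of Theorem \ref{cor:equiv_locspeccomp}(v) transfers from $H_0+V_+$ to $H$.

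The one serious analytic step is the Nash-type derivation of ultracontractivity from (\ref{eq:spi}). This is where the two auxiliary hypotheses are genuinely used: substochasticity is what allows me to bound $\|f_t\|_1$ by $\|f\|_1$ while running the differential inequality, and the kernel assumption, combined with selfadjointness, supplies the symmetry needed in the duality step that converts an $L^1\to L^2$ bound into an $L^2\to L^\infty$ bound. Everything surrounding this core---the reduction to Theorem \ref{thm:main1}, the transfer of (\ref{eq:spi}) from $h$ to $h_{V_+}$, and the Hilbert-Schmidt factorization---is essentially bookkeeping on top of the apparatus already developed in the paper.
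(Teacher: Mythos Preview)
Your reduction to Theorem \ref{thm:main1} is fine, and so is the observation that (\ref{eq:spi}) persists for $h_{V_+}$. The gap is in the ``one serious analytic step'': the super Poincar\'e inequality with an \emph{arbitrary} rate function $\beta$ does not imply ultracontractivity. The Nash-type ODE you wrote down is
\[
u'(t)\;\le\;-\tfrac{2}{r}\bigl(u(t)-\beta(r)\bigr)\qquad\text{for every }r>0,
\]
and optimising in $r$ leads to $u'\le -\phi(u)$ with $\phi(U)=\sup_{r>0}\tfrac{2}{r}(U-\beta(r))$. Finite-time blow-down (i.e.\ a bound on $u(t)$ independent of $u(0)$) requires $\int^\infty \rmd U/\phi(U)<\infty$, and this is an \emph{extra} integrability condition on $\beta$ that the theorem does not assume. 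For instance, if $\beta(r)=e^{1/r}$ one gets $\phi(U)\sim U\log U$ and the integral diverges, so no $L^1\to L^2$ bound follows. This is consistent with the paper's own logic: the Proposition immediately following the theorem shows only the implication ultracontractivity $\Rightarrow$ (\ref{eq:spi}), precisely because the converse fails without further hypotheses.

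The paper avoids this obstruction by never aiming at an $L^2\to L^\infty$ bound. Instead it proves directly that $T\One_E$ is compact on $L^2$ for $T=e^{-t(H_0+V_+)}$ and $m(E)<\infty$, in two stages: first, the kernel together with substochasticity and dominated convergence yield that $T\One_E$ maps the $L^2$-unit ball into an $L^1$-relatively compact set; second, (\ref{eq:spi}) is applied \emph{once, with a fixed $r$}, to the differences $f_n-f_m\in T(B)$, whose form norms are uniformly bounded by spectral calculus, so that a non-$L^2$-Cauchy sequence would have to be non-$L^1$-Cauchy as well, contradicting the first stage. The crucial point is that (\ref{eq:spi}) is used only to compare $L^2$ and $L^1$ norms on a set where $h$ is already bounded, not to manufacture a smoothing estimate; this is why no growth condition on $\beta$ is needed.
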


\begin{proof}
Let $E\subset X$ be a subset of finite measure, $T=e^{-t(H_0+V_+)}$.\\
 (1) By our assumption $T$ is substochastic and hence continuous from $L^\infty$ to $L^\infty$. Thus, its   kernel belongs to $L^1$ pointwise almost everywhere. \\
 (2) $T(A)$ is compact in $L^1$ for every $L^\infty$-bounded subset $A \subset L^1(E)$: Let $f_n \in A$ be a sequence. Without loss of generality, $f_n$ is weak-* convergent in $L^\infty$ (choose a proper subsequence). But then with (1) $T f_n$ converges pointwise a.e.
 By substochasticity of $T$ we have furthermore $|T f_n| \leq a \cdot T \One_E \in L^1$ for some $a \in \RR$. Therefore $T f_n$ converges and $T A$ is compact. \\
 (3) Let $B$ be the unit ball in $L^2$. Then $T \One_E B$ is compact in $L^1$. This follows directly with (2) and the fact that $\One_E B$ is uniformly integrable in $L^1$.\\
 (4) Assume $T \One_E B$ not to be compact in $L^2$. Then there exists  a sequence $f_n \in T \One_E B$ with
 \[ \| f_n - f_m \|^2 \geq \epsilon \]
 for some $\epsilon > 0$. Moreover, spectral calculus shows that  there exists an $C\geq 0$ such that
 \[ h[f - g] \leq C \]
 for all $f,g\in  T B$.  Now choose $r := \frac{\epsilon}{2c}$. Then $\frac{\epsilon}{2 \beta(r)} \leq \| f_n - f_m \|_1^2$ by (\ref{eq:spi}). This contradicts (3).
\end{proof}

We finish this section by showing that validity of (\ref{eq:spi}) is a direct consequence of ultracontractivity.

\begin{proposition} Let $H \geq 0$ be selfadjoint, $h$ the associated form and assume that the associated semigroup is ultracontractive. Then (\ref{eq:spi}) is valid.
\end{proposition}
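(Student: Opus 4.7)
The plan is to decompose $\|f\|^2$ into a semigroup part and a complementary part, and to estimate each piece separately using the two available tools: functional calculus (to handle the quadratic form) and ultracontractivity (to handle the $L^1$-norm).

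For any $t>0$ and $f\in Q(H)$, I would write
\[
 \|f\|^2 = \bigl(e^{-tH}f\mid f\bigr) + \bigl((I-e^{-tH})f\mid f\bigr).
\]
For the second summand, the elementary inequality $1-e^{-ts}\le ts$ valid for all $s\ge 0$, together with the spectral theorem applied to $H\ge 0$, yields
\[
 \bigl((I-e^{-tH})f\mid f\bigr) \le t\,(Hf\mid f)=t\,h[f].
\]
This already isolates the $h[f]$ term with prefactor $r:=t$, so only the $L^1$-bound for the first summand remains.

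For the first summand I would exploit ultracontractivity, i.e.\ boundedness of $e^{-sH}\colon L^2\to L^\infty$ for $s>0$. Self-adjointness of the semigroup together with duality gives that $e^{-sH}\colon L^1\to L^2$ is bounded with the same norm, so, composing, $e^{-tH}=e^{-(t/2)H}e^{-(t/2)H}$ is bounded from $L^1$ to $L^\infty$ with some norm $C(t)<\infty$. Therefore
\[
 \bigl(e^{-tH}f\mid f\bigr)\le \|e^{-tH}f\|_\infty\,\|f\|_1\le C(t)\,\|f\|_1^2.
\]
Combining the two estimates gives $\|f\|^2\le t\,h[f]+C(t)\,\|f\|_1^2$, and setting $r=t$ and $\beta(r):=C(r)$ produces the super Poincar\'e inequality (\ref{eq:spi}) on $Q(H)\cap L^1$; on the rest of $Q(H)$ the right-hand side is $+\infty$ and there is nothing to prove.

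No step looks genuinely hard here: the only mild subtlety is keeping the duality/composition argument honest, since ultracontractivity as usually stated only provides $e^{-tH}\colon L^2\to L^\infty$, and one must pass through the adjoint and use the semigroup property to obtain an $L^1\to L^\infty$ bound. Once that is in place, the functional-calculus estimate $1-e^{-tH}\le tH$ finishes the argument immediately.
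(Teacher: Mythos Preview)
Your argument is correct and follows essentially the same route as the paper: split off $e^{-tH}f$, control the complementary piece by $t\,h[f]$ via spectral calculus, and bound the semigroup piece in terms of $\|f\|_1^2$ via ultracontractivity. The only cosmetic differences are that the paper splits in norm (via $\|e^{-tH}f-f\|^2\le 2t\,h[f]$) rather than in inner product, and uses the $L^1\!\to\!L^2$ bound (one duality) rather than your $L^1\!\to\!L^\infty$ bound (duality plus semigroup composition); your version even gives the slightly better constant $r=t$ in place of $2t$.
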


\begin{proof} Spectral calculus and simple estimates show
$\| e^{-t H}f - f \|^2 \leq 2 t h[f, f].$
This yields
\[ \|f\|_2^2 \leq 2 t h [f,f] + \|e^{-t H} f\|_2. \]
As the semigroup is ultracontractive,  it maps $L^1$ continuously into $L^2$ and the statement follows.
\end{proof}

\section{Compactness of measure perturbations}\label{sec:dirichletsetting}

In this section we consider regular Dirichlet forms: so $X$ is assumed to be a locally compact, $\sigma$-compact metric space, $\calB$ the Borel-$\sigma$-field and $m$ a regular Borel measure with $\supp m = X$. We assume that $H_0 \geq 0$ is associated with a regular Dirichlet form $\calE$ with domain $\calD = \calD(\calE) = D(H_0^{1/2})$. By
\[ \rmCap(U) := \inf \{ \calE[\varphi, \varphi] + \| \varphi \|^2 \mid \varphi \geq \One_U, \varphi \in \calD \} \]
we define the \emph{capacity} of $U$, for $U$ open; one can then extend $\rmCap(\cdot)$ in the usual way to an outer regular setfunction by letting
\[ \rmCap(E) := \inf \{ \rmCap(U) \mid U \supset E, U \text{ open } \}; \]
see \cite{FukushimaOT-94} for details. From \cite{FukushimaOT-94}, we also infer that every $u \in \calD$ admits a \emph{quasi-continuous version} $\tilde{u}$, the latter being unique up to sets of capacity zero. This allows us to consider measure potentials in the following way: see \cite{Mazya-64}, \cite{Mazya-85} for the special case of the Laplacian and locally finite measures, \cite{Stollmann-92} and the references in there.

Let $\calM_0 = \{ \mu : \calB \rightarrow [0, \infty] \mid \mu \mbox{ a measure  }\mu \ll \rmCap \}$, where $\ll$ denotes absolute continuity, i.e. the property that $\mu(B) = 0$ whenever $B \in \calB$ and $\rmCap(B) = 0$. For measures in $\calM_0$ we explicitly allow that the measure takes the value $\infty$. A particular example is $\infty_B$, defined by
\[ \infty_B(E) = \infty \cdot \rmCap(B \cap E), \]
with the convention $\infty \cdot 0 = 0$. Note that for $\mu \in \calM_0$ we have that $\mu[u, v] := \int_X \tilde{u} \tilde{v} \: \rmd \mu$ is well defined for $u, v \in \calD(\mu)$, where $\calD(\mu) = \{ u \in \calD \mid \tilde{u} \in L^2(X, \calB, \mu) \}$. It is easy to see that
\[ \calD(\calE + \mu) := \calD \cap \calD(\mu), (\calE + \mu)[u, v] := \calE[u, v] + \mu[u, v] \]
gives a closed form (not necessarily densely defined). One can check that, e.g., $\calE + \infty_B = \calE |_{\calD_0(B^c)}$, where $\calD_0(U) = \{u \in \calD \mid \tilde{u} |_{U^c} = 0 \text{ q.e.} \}$.

For the Laplacian in $\RR^d$, $B$ closed (so $U$ is open) we get that $\calD(\calE + \infty_B) = W_0^{1, 2}(U)$ so that the form sum is $H_0 + \infty_B = - \Delta |_U$ with Dirichlet boundary condition.

If $\mu^- \in \calM_0$ is form small w.r.t. $\calE + \mu^+$, we can furthermore define $\calE + \mu = \calE + \mu^+ - \mu^-$ by the KLMN-theorem, already referred to above. Note that for $\mu^+ = 0$ this form boundedness implies that $\mu^-$ is a Radon measure, i.e., finite on all compact sets.

It is now an interesting question to determine what the property $V \rightarrow \infty$ at $\infty$ means for measures. For the classical Dirichlet form on $\RR^1$ this was answered by Molchanov \cite{Molchanov-53} who proved that $\sigma_{ess}(-\Delta + \mu) = \emptyset \Leftrightarrow \mu(U + x) \rightarrow \infty$ for every open interval $U$ and $x \rightarrow \infty$.

The direct analog is not appropriate in higher dimensions but a recent characterization can be found in \cite{MazyaS-05}. Here, we state and prove a characterization (originally from \cite{Stollmann-94b}) in the much more general setting of Dirichlet forms with ultracontractive semigroups. To this end, we introduce
\[ Av^\lambda_G(\mu) := \inf \{ \mu[u, u] \mid \| u \|_2 = 1, \calE[u] \leq \lambda, \tilde{u} = 0 \text{ q.e. on } G^c \} \]
for $G \subset X$, $\lambda \in \RR$.

\begin{theorem}\label{thm:equivessspec}
 Let $(K_n)_{n \in \NN}$ be a sequence of compact sets with $\bigcup K_n^\circ = X$, $G_n := K_n^c$. Assume that $H_0$ has ultracontractive semigroup, $\mu^+ \in \calM_0$ and $\mu^- \in \calM_0$ is form small with respect to $\calE + \mu^+$. Then the following are equivalent:
 \begin{enumerate}[(i)]
  \item $\sigma_{ess}(H_0 + \mu) = \emptyset$.
  \item For any $C \geq 0$ and $(f_n) \subset \calD$ with $(\calE + \mu + 1)[f_n] \leq C$ and $\tilde{f_n} = 0$ on $K_n$, it follows that $\| f_n \|_2 \rightarrow 0$ as $n \rightarrow \infty$.
  \item    $Av^\lambda_{G_n}(\mu) \rightarrow \infty \text{ as } n \rightarrow \infty$ for each $\lambda >0$.
 \end{enumerate}
\end{theorem}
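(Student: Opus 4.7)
The plan is to establish (i) $\Leftrightarrow$ (ii) and (ii) $\Leftrightarrow$ (iii), with (ii) as the natural pivot linking the spectral condition in (i) and the quantitative averages in (iii). Without loss of generality $(K_n)$ is increasing (replace by $K_1 \cup \dots \cup K_n$), so that any compactly supported test function has support contained in some $K_N^\circ$.

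For (i) $\Rightarrow$ (ii) I would invoke Corollary \ref{cor:equiv_emptyess} applied to $H = H_0 + \mu$: there (i) is equivalent to item (vii), asserting that every form-norm bounded sequence converging weakly to $0$ in $L^2$ converges in norm. The sequence $(f_n)$ from the hypothesis of (ii) is form bounded by assumption, and it converges weakly to $0$ because for any $g \in C_c(X)$ one has $\mathrm{supp}(g) \subset K_N^\circ$ for some $N$, hence $(f_n, g) = \int_{\mathrm{supp}(g)} \tilde f_n \bar g\,\rmd m = 0$ for all $n \ge N$, and $C_c(X)$ is dense in $L^2$. For the converse (ii) $\Rightarrow$ (i), start with $(\psi_n) \subset Q(H)$ satisfying $(\calE + \mu + 1)[\psi_n] \le C$ and $\psi_n \rightharpoonup 0$; pick cutoff functions $\eta_k \in \calD \cap C_c(X)$ with $0\le \eta_k \le 1$ and $\eta_k = 1$ on a neighborhood of $K_k$, which exist by regularity of $\calE$. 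Ultracontractivity of $e^{-tH_0}$ together with Remark \ref{rem}(1) makes $H_0$ spatially locally compact, so multiplication by $\eta_k$ is a compact map $Q(H_0) \to L^2$, and by the continuous embedding $Q(H) \hookrightarrow Q(H_0)$ (from $\mu^+ \ge 0$ and form smallness of $\mu^-$) also $Q(H) \to L^2$; hence $\eta_k\psi_n \to 0$ in $L^2$ as $n \to \infty$ for each fixed $k$. A diagonal argument produces $k_n \to \infty$ with $\|\eta_{k_n}\psi_n\| \to 0$, and the residuals $g_n := (1-\eta_{k_n})\psi_n$ vanish q.e.\ on $K_{k_n}$; after a preliminary truncation reducing to bounded $\psi_n$ (using the Markovian property of $\calE$) the Dirichlet form Leibniz inequality keeps $(\calE+\mu+1)[g_n]$ uniformly bounded, so (ii) applies (modulo reindexing so the vanishing index matches the sequence index) and yields $\|g_n\| \to 0$, whence $\|\psi_n\| \to 0$.

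The equivalence (ii) $\Leftrightarrow$ (iii) consists of two short rescaling contradictions. If (ii) fails, extract $(f_n)$ with $(\calE+\mu+1)[f_n] \le C$, $\tilde f_n = 0$ on $K_n$, and $\|f_n\|_2 \ge \delta$; the form smallness of $\mu^-$ converts this to uniform bounds $\calE[f_n], \mu^+[f_n] \le C'$, so the normalisations $u_n := f_n/\|f_n\|_2$ are admissible in $Av^\lambda_{G_n}(\mu)$ with $\lambda := C'/\delta^2$, and hypothesis (iii) would force $\mu^+[u_n] \to \infty$, contradicting $\mu^+[u_n] \le C'/\delta^2$. Conversely, if (iii) fails, then for some $\lambda > 0$ a subsequence satisfies $Av^\lambda_{G_n}(\mu) \le M$; choose near-infimizers $u_n$ with $\|u_n\|_2 = 1$, $\calE[u_n] \le \lambda$, $\tilde u_n = 0$ on $K_n$, and $\mu[u_n] \le M+1$ (form smallness then gives $\mu^+[u_n]$ bounded, hence $(\calE+\mu+1)[u_n]$ bounded). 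Padding with zeros at the non-subsequence indices produces a sequence meeting the hypotheses of (ii) for which $\|u_n\|_2 \not\to 0$, a contradiction.

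The principal obstacle is the cutoff argument in (ii) $\Rightarrow$ (i). The diagonal index $k_n$ must grow fast enough that the residuals $g_n$ are supported in a cofinal sequence of $K_{k_n}^c$'s so that (ii) applies, yet slowly enough that $\eta_{k_n}\psi_n \to 0$. The uniform form bound on $g_n$ depends on a preliminary truncation via the Markovian property and the Dirichlet form Leibniz inequality $\calE[\eta u] \le 2(\|\eta\|_\infty^2 \calE[u] + \|u\|_\infty^2 \calE[\eta])$ for bounded $u$, together with $\mu^+[\eta u] \le \|\eta\|_\infty^2 \mu^+[u]$ and the form smallness of $\mu^-$. This is where the hypothesis that $\calE$ is a Dirichlet form (rather than a mere closed form) is genuinely used.
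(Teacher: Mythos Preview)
Your implications (i) $\Rightarrow$ (ii) and (ii) $\Leftrightarrow$ (iii) are correct and essentially match the paper's arguments. The difficulty lies in closing the cycle, and here your route (ii) $\Rightarrow$ (i) has a genuine gap.

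In step 5 of your cutoff argument you need a uniform bound on $(\calE+\mu+1)[g_n]$ for $g_n=(1-\eta_{k_n})\psi_n$. The Leibniz-type inequality you invoke gives
\[
\calE[(1-\eta_{k_n})\psi_n]\;\le\; 2\,\calE[\psi_n]+2\,\|\psi_n\|_\infty^2\,\calE[\eta_{k_n}],
\]
and even after your truncation, which only ensures $\|\psi_n\|_\infty\le M$ for a \emph{fixed} $M$, the term $\calE[\eta_{k_n}]$ typically diverges as $k_n\to\infty$ (for the classical form on $\RR^d$ a cutoff equal to $1$ on a ball of radius $k$ has energy of order $k^{d-1}$). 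So the bound does not follow, and hypothesis (ii) cannot be applied to $(g_n)$. There is no cheap repair: keeping $k$ fixed makes the form bound work but then $g_n^{(k)}$ only vanishes on the fixed set $K_k$, which does not match the hypothesis of (ii); letting $k_n\to\infty$ destroys the form bound.

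The paper avoids this obstacle by closing the cycle along (iii) $\Rightarrow$ (i) instead, and not via cutoffs at all. It invokes a stability result (Corollary to Theorem~1 in \cite{Stollmann-94a}) asserting that for compact $K_l$ one has $\sigma_{ess}(H_0+\mu+\infty_{K_l})=\sigma_{ess}(H_0+\mu)$; ultracontractivity enters here. Hence for each $l$ there is a genuine Weyl sequence $(g^{(l)}_n)_n$ for $H_0+\mu$ at $\lambda$ consisting of functions in $\calD_0(K_l^c)$, i.e.\ already vanishing q.e.\ on $K_l$, with $(\calE+\mu)[g^{(l)}_n]\to\lambda$ uniformly in $l$. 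The diagonal $f_n:=g^{(n)}_n$ then has $\|f_n\|=1$, $\tilde f_n=0$ on $K_n$, and bounded $\calE$-energy, contradicting (iii). The point is that the Dirichlet obstacle $+\infty_{K_l}$ manufactures the required support condition \emph{inside} the operator domain, so no product rule and no control of $\calE[\eta_k]$ is needed.
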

\begin{proof}
 (i) $\Rightarrow$ (ii) By (v) of Corollary \ref{cor:equiv_emptyess} the sequence $(f_n)$ is relatively compact.
  Since $f_n = 0$ a.e. on $K_n$ the sequence furthermore  satisfies $f_n \rightarrow 0$ weakly.
 Consequently, $f_n\to 0$ in norm as well. \\
   (ii) $\Rightarrow$ (iii) Assume that $Av^\lambda_{G_n}(\mu) \nrightarrow \infty$; then we can obviously find a sequence that contradicts (ii). \\
 (iii) $\Rightarrow$ (i) Assume $\sigma_{ess}(H_0 + \mu) \neq \emptyset$. Then there must be a singular sequence $(g_n) \subset D(H_0 + \mu)$ with $\| g_n \| = 1$ and $\|(H_0 + \mu) g_n - \lambda g_n \| \leq \frac{1}{n}$ for all $n \in \NN$. In particular
 \[ (\calE + \mu)[g_n, g_n] \rightarrow \lambda. \]
 Since $K_l$ is compact we can infer from \cite{Stollmann-94a}, Corollary to Theorem 1, that $\sigma_{ess}(H_0 + \mu + \infty_{K_l}) = \sigma_{ess}(H_0 + \mu) \ni \lambda$; so that there is a sequence $(g_n^{(l)})_{n\in\NN} \subset \calD(\calE + \mu + \infty_{K_l})$ with the same properties. We choose $f_n = g_n^{(n)}$ and see that $Av_{G_n}^\eta(\mu) \nrightarrow \infty$, for appropriate $\eta$. In fact 
 \[ (\calE + \mu_+ + 1)[f_n] \leq c (\calE + \mu + 1)[f_n] + d \| f_n \|^2, \]
 since $\mu_-$ is form small w.r.t. $\calE + \mu_+$. This implies 
 \[ \calE[f_n] \leq \eta \text{ for } \eta = c (\lambda + 1) + d + 1 \]
 and $n$ large enough.
\end{proof}

\section{Back to Euclidean space}
Let us now consider the case of ordinary Schrödinger operators in $L^2(\RR^d)$. In this setting $H_0=-\Delta$ is spatially locally compact since the semigroup is ultracontractive, see Remark \ref{rem} above. Therefore, we know that $H_0+V_+$ has empty essential spectrum, whenever the nonnegative, measurable function $V_+:\RR^d\to [0,\infty]$ satisfies the condition that all sublevel sets $\{V_+\le n\}$ have finite measure, for $n\in\NN$. However, this latter condition is too strong, as was observed already by Rellich, \cite{Rellich-48}, a fact we learned from \cite{Simon-08}. In this latter paper, a sufficient condition can be found that covers Rellich's example. The following condition is obviously weaker:

\begin{definition}
 Denote by $C(k)$ the unit cube in $\RR^d$ centered at $k\in\RR^d$. We call a measurable set $E\subset \RR^d$ \emph{thin at infinity} if
 \[ \lim_{|k|\to\infty}|E\cap C(k)|=0. \]
\end{definition}

Using the Birman-Solomyak space
\[ \ell^\infty(L^2)=\{ f\in L^2_{loc}\mid \| f\|_{2;\infty}:= \sup_{k\in\ZZ^d}\|\One_{C(k)}f\|_2<\infty\} \]
and its closed subspace
\[ c_0(L^2)=\{ f\in L^2_{loc} \mid  \lim_{k\to\infty}\|\One_{C(k)}f\|_2=0\}, \]
we see that $E\subset \RR^d$ is thin at infinity iff $\One_E\in c_0(L^2)$.

\begin{theorem}\label{S.O.}
 Let $V_+,V_-\ge 0$ be measurable and assume that $V_-$ is form small with respect to $-\Delta$. If $\{V_+\le n\}$ is thin at infinity for every $n\in\NN$, then $\sigma_{ess}(-\Delta+V_+-V_-)=\emptyset$.
\end{theorem}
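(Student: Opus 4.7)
The plan is to deduce Theorem~\ref{S.O.} directly from Theorem~\ref{thm:main1} applied with $H_0 = -\Delta$. Form-smallness of $V_-$ with respect to $-\Delta + V_+$ follows at once from form-smallness with respect to $-\Delta$, since $V_+ \geq 0$ pointwise gives $((-\Delta+V_+)u\mid u) \ge (-\Delta u\mid u)$ for every $u \in Q(-\Delta)$, so the inequality defining form-smallness passes verbatim. The only thing to verify is therefore that $\One_{E_n}$ is $(-\Delta + V_+)$-relatively compact for each $n\in\NN$, where $E_n := \{V_+ \le n\}$.

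Since $V_+ \ge 0$, there is a continuous embedding $Q(-\Delta + V_+) \hookrightarrow Q(-\Delta) = H^1(\RR^d)$. By the equivalence (i)$\Leftrightarrow$(v) of Theorem~\ref{cor:equiv_locspeccomp}, it thus suffices to show that, for every measurable $E \subset \RR^d$ thin at infinity, multiplication by $\One_E$ is a compact operator from $H^1(\RR^d)$ to $L^2(\RR^d)$.

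The strategy is to approximate $\One_E$ in the operator norm by $\One_{E \cap B_R}$ as $R \to \infty$. Each approximant is compact as a map $H^1 \to L^2$: we have $|E\cap B_R|<\infty$, and $-\Delta$ is spatially locally compact by Remark~\ref{rem}(1) (the heat semigroup is ultracontractive), so Theorem~\ref{cor:equiv_locspeccomp} applies. For the tail I would use a Birman-Solomyak-style estimate. Cover $\RR^d$ by the unit cubes $C(k)$, $k\in\ZZ^d$, pick smooth cutoffs $\tilde\chi_k$ with $\tilde\chi_k \equiv 1$ on $C(k)$, supported in a fixed enlargement of $C(k)$ and having uniformly finite overlap, and combine H\"older's inequality with a Sobolev embedding $H^1(\supp \tilde\chi_k) \hookrightarrow L^p(\supp \tilde\chi_k)$, using $p = 2d/(d-2)$ for $d \ge 3$, any fixed $p < \infty$ for $d = 2$, and $p = \infty$ for $d = 1$. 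Setting $\alpha := 1/2 - 1/p > 0$, this yields
\[ \|\One_{C(k)\cap E}\,u\|_2 \;\le\; C\,|C(k)\cap E|^{\alpha}\,\|\tilde\chi_k u\|_{H^1}, \]
and summing in $k$, together with the finite-overlap property, gives a bound of the form
\[ \|\One_{E\setminus B_R}\,u\|_2^2 \;\le\; C'\,\Bigl(\sup_{|k|>R-\sqrt{d}}|C(k)\cap E|\Bigr)^{2\alpha}\,\|u\|_{H^1}^2, \]
whose right-hand side vanishes as $R\to\infty$ because $\One_{E_n}\in c_0(L^2)$.

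Combining the two ingredients, $\One_{E_n}$ is a norm limit of compact operators $H^1\to L^2$ and hence compact; Theorem~\ref{thm:main1} then delivers $\sigma_{ess}(-\Delta+V_+-V_-) = \emptyset$. The main technical burden is the Birman-Solomyak tail bound, where the dimension enters through the choice of Sobolev exponent; the remaining steps are routine bookkeeping around Section~1 and Remark~\ref{rem}(1).
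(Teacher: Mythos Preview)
Your argument is correct and follows the same overall architecture as the paper's proof: reduce to Theorem~\ref{thm:main1}, show $\One_E$ is $(-\Delta+V_+)$-relatively compact by approximating with $\One_{E\cap B_R}$ (compact by spatial local compactness), and control the tail via a Birman--Solomyak-type multiplier bound.

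The one genuine difference is how the tail is handled. The paper works with a high resolvent power $(-\Delta+1)^{-p}$, $p>d/4$, and invokes Strichartz's multiplier inequality $\|h(-\Delta+1)^{-p}\|\le c\,\|h\|_{2;\infty}$ as a black box; this immediately gives $\|\One_{E\setminus B_R}(-\Delta+1)^{-p}\|\to 0$ from $\One_E\in c_0(L^2)$. You instead stay at the form level $H^1$ and prove the needed bound by hand: localize to unit cubes with a partition of unity, apply the scale-invariant Sobolev embedding $H^1\hookrightarrow L^p$ on each cube, and use H\"older to extract the factor $|C(k)\cap E|^{\alpha}$. Your route is more self-contained (no external citation beyond standard Sobolev embeddings) and shows directly that $\One_E:H^1\to L^2$ is compact, which feeds nicely into the Corollary that follows. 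The paper's route is shorter on the page and makes the dependence on the Birman--Solomyak norm $\|\cdot\|_{2;\infty}$ explicit in one stroke, at the price of citing Strichartz. Both arguments are dimension-dependent in the same way (choice of $p$, resp.\ Sobolev exponent), and both ultimately exploit the same mechanism.
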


\begin{proof}
 For fixed $n\in\NN$ and $E=\{V_+\le n\}$ we show that
 \[ \One_E(-\Delta +1)^{-p}\in \calK \]
 for some $p\in\NN$. (Then, $\One_E (- \Delta + V_+  + 1)^{-1} \in \calK$ as well and the statement follows from Theorem \ref{thm:main1}.)

 An inequality of Strichartz, \cite{Strichartz-67} and \cite{Simon-05}, Lemma 4.10, gives that, for any $h\in\ell^\infty(L^2)$ and $p>\frac{d}{4}$:
 \[ \| h(-\Delta+1)^{-p}\| \le c\cdot \| h\|_{2;\infty}. \]
 Therefore, for $E$ as above, we get
 \[ \One_E(-\Delta +1)^{-p}=\|\cdot\|-\lim_{R\to\infty}\One_E\One_{B_R(0)}(-\Delta +1)^{-p}\in\calK, \]
 since $-\Delta$ is spatially locally compact.
\end{proof}

Playing with the equivalences from Theorem \ref{cor:equiv_locspeccomp} we get the following nice Corollary. It shows that $-\Delta$ could be replaced by  quite a variety of operators: roots of the Laplacian, or relativistic Laplacians, subelliptic operators, as well as elliptic partial differential operators:

\begin{corollary}
 Let $H_0\ge \gamma$ be selfadjoint with $Q(H_0)$ continuously embedded in $W^{s,2}(\RR^d)$ for some $s>0$  
 Let $V_+,V_-\ge 0$ be measurable and assume that $V_-$ is form small with respect to $H_0$. If $\{V_+\le n\}$ is thin at infinity for every $n\in\NN$, then $\sigma_{ess}(H_0+V_+-V_-)=\emptyset$.
\end{corollary}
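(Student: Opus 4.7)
The plan is to combine the preceding Euclidean result for $-\Delta$ with the equivalences in Theorem \ref{cor:equiv_locspeccomp} in order to transfer the needed relative compactness from $-\Delta$ to $H_0$. The bridge will be the Bessel-potential identification $\calH^s(-\Delta)=W^{s,2}(\RR^d)$, which converts a statement about the fractional domain of the Laplacian into one about a Sobolev space that the hypothesis relates to $Q(H_0)$. Once the multiplication operator $\One_E$ is shown to be $H_0$-relatively compact for every sublevel set $E=\{V_+\le n\}$, Theorem \ref{thm:main1} does the rest.

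First, I would fix $n\in\NN$ and set $E=\{V_+\le n\}$. Since $\One_E\in c_0(L^2)$, the Strichartz estimate used in the proof of Theorem \ref{S.O.} together with spatial local compactness of $-\Delta$ yields
\[ \One_E(-\Delta+1)^{-p}\in\calK\quad\text{for }p>\tfrac{d}{4}, \]
so that $\One_E$ is $(-\Delta)$-relatively compact. By the equivalence (i)$\Leftrightarrow$(vi) in Theorem \ref{cor:equiv_locspeccomp} applied to $H=-\Delta$, this is in turn equivalent to
\[ \One_E:\calH^s(-\Delta)=W^{s,2}(\RR^d)\longrightarrow L^2(\RR^d)\text{ being compact} \]
for our specific $s>0$.

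Next, the hypothesis that $Q(H_0)$ embeds continuously in $W^{s,2}(\RR^d)$ allows me to precompose with the embedding; the resulting map $\One_E:Q(H_0)\to L^2(\RR^d)$ is still compact. Invoking (v) of Theorem \ref{cor:equiv_locspeccomp} with $H=H_0$ shows that $\One_E$ is $H_0$-relatively compact. Because $V_+\ge 0$, the form $h_0+V_+$ dominates $h_0$, so $Q(H_0+V_+)\hookrightarrow Q(H_0)$ continuously, and $\One_E$ is a fortiori $(H_0+V_+)$-relatively compact. The same monotonicity shows that form smallness of $V_-$ with respect to $H_0$ upgrades, with the same constants $q<1$ and $C_q$, to form smallness with respect to $H_0+V_+$. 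Hence the hypotheses of Theorem \ref{thm:main1} are met with the sublevel sets $\{V_+\le n\}$ ranging over $\NN$, which gives $\sigma_{ess}(H_0+V_+-V_-)=\emptyset$.

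The only non-routine point is the identification $\calH^s(-\Delta)=W^{s,2}(\RR^d)$ for general $s>0$, which I would either justify in one line via the functional calculus on the Fourier side (so that $W^{s,2}$ is understood as the Bessel potential space, coinciding with the Slobodeckij space in the $L^2$ setting) or circumvent by working directly with the operator $(-\Delta+1)^{s/2}$ and noting that it maps $Q(H_0)$ boundedly into $L^2$ by the hypothesized embedding. Beyond this bookkeeping, the argument is a clean composition of previously established facts.
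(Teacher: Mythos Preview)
Your proposal is correct and follows essentially the same route as the paper's proof: use Theorem~\ref{S.O.} together with the equivalence (i)$\Leftrightarrow$(vi) of Theorem~\ref{cor:equiv_locspeccomp} and the identification $W^{s,2}(\RR^d)=\calH^s(-\Delta)$ to obtain compactness of $\One_E:W^{s,2}\to L^2$, then precompose with the hypothesized embedding $Q(H_0)\hookrightarrow W^{s,2}$ and feed this into Theorem~\ref{thm:main1}. Your write-up is simply more explicit than the paper's one-line version (which in fact writes $\calH^{s/2}(-\Delta)$, apparently a slip for $\calH^{s}(-\Delta)$); the additional bookkeeping you supply about the upgrade of form smallness of $V_-$ from $H_0$ to $H_0+V_+$ and about $(H_0+V_+)$-relative compactness is routine but worth spelling out.
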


\begin{proof}
 From Theorem \ref{S.O.} and Theorem \ref{cor:equiv_locspeccomp} we deduce that $\One_E:  W^{s,2}(\RR^d)\to L^2$ is compact, since $ W^{s,2}(\RR^d)=\calH^{\frac{s}{2}}(-\Delta)$ in the notation introduced in Section 1.
\end{proof}

The referee kindly pointed references \cite{BenciF-78a, BenciF-78b} that contain criteria for emptyness of the essential spectrum of Schrödinger operators. The condition in these papers is $V = V_1 + V_2$,
\[ \int_{B(x,1)}(V_1+C)^{-1}(y)dy \to 0 \mbox{  for  }| y|\to\infty ,\qquad(\star \star ) \]
where $C$ is a suitable lower bound for $V_1$ and $B(x,1)$ denotes the unit ball shifted to $x$. It is easy to see that $(\star \star )$ implies that $\{V_1\le n\}$ is thin at infinity for every $n\in\NN$. In the first paper, $V_2=0$ and $C=0$ is assumed and in the second paper, negative perturbations $V_2$ in $L^\frac{d}{2}$ are allowed for $d\ge 2$. Clearly, our Theorem \ref{S.O.} is more general.

\subsection*{\textbf{Acknowledgment}}
P.S. thanks Barry Simon for a fruitful correspondence on the subject matter as well as Vitali Liskevich for some useful hints. Thanks go to a referee for several helpful remarks.

\def\cprime{$'$}
\providecommand{\bysame}{\leavevmode\hbox to3em{\hrulefill}\thinspace}

\end{document}